\newenvironment{proof}{{\bf Proof:  }}{\hfill\rule{2mm}{2mm}}
\newcommand{\junk}[1]{}
\newtheorem{theorem}{Theorem}
\newtheorem{lemma}[theorem]{Lemma}
\newtheorem{Lemma}[theorem]{Lemma}
\newtheorem{corollary}[theorem]{Corollary}
\newtheorem{definition}{Definition}
\newtheorem{remark}{Remark}
\newcommand{\Aut}{\ensuremath{\textnormal{Aut}}}
\newcommand{\x}{\ensuremath{{\bf x}}}
\newcommand{\A}{\ensuremath{\mathcal A}}
\newcommand{\C}{\ensuremath{\mathbb{C}}}
\DeclareMathOperator{\tr}{tr}
\DeclareMathOperator{\Tr}{Tr}
\title{A Harary-Sachs Theorem for Hypergraphs}
\author{Gregory J. Clark and Joshua N. Cooper\\
\small Sa\"id Business School\\[-0.8ex]
\small University of Oxford\\
\small \texttt{gregory.clark@sbs.ox.ac.uk  }\\
\small Department of Mathematics\\[-0.8ex]
\small University of South Carolina\\
\small \texttt{cooper@math.sc.edu}\\
}
\begin{document}
\maketitle

\begin{abstract}
We generalize the Harary-Sachs theorem to $k$-uniform hypergraphs: the codegree-$d$ coefficient of the characteristic polynomial of a uniform hypergraph ${\cal H}$ can be expressed as a weighted sum of subgraph counts over certain multi-hypergraphs with $d$ edges.  We include a detailed description of the aforementioned multi-hypergraphs and a formula for their corresponding weights.
\end{abstract}
\section{Introduction}
An early, seminal result in spectral graph theory of Harary \cite{Har} (and later, more explicitly, Sachs \cite{Sac}) showed how to express the coefficients of a graph's characteristic polynomial as a certain weighted sum of the counts of various subgraphs of $G$ (a thorough treatment of the subject is given in \cite{Big}, Chapter 7).
\begin{theorem}
\label{T:Harary}
(\cite{Har},\cite{Sac}) Let $G$ be a labeled simple graph on $n$ vertices. If $H_i$ denotes the collection of $i$-vertex graphs whose components are edges or cycles, and $c_i$ denotes the coefficient of $\lambda^{n-i}$ in the characteristic polynomial of $G$, then 
\[
c_{i} = \sum_{H \in H_i} (-1)^{c(H)}2^{z(H)} [\# H \subseteq G]
\]
where $c(H)$ is the number of components of $H$, $z(H)$ is the number of components which are cycles, and $[\# H \subseteq G]$ denotes the number of (labeled) subgraphs of $G$ which are isomorphic to $H$.
\end{theorem}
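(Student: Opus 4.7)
The plan is the classical one: expand $\det(\lambda I - A)$ via the Leibniz formula and put the surviving terms in bijection with elementary subgraphs on $i$ vertices.

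First, I would write $\det(\lambda I - A) = \sum_{i=0}^n c_i \lambda^{n-i}$, where $A$ is the adjacency matrix of $G$, and invoke the standard identity
\[
c_i = (-1)^i \sum_{S} \det(A[S]),
\]
with $S$ ranging over $i$-element vertex subsets and $A[S]$ the principal submatrix on $S$. Expanding each principal minor via Leibniz,
\[
\det(A[S]) = \sum_{\pi \in \mathrm{Sym}(S)} \mathrm{sgn}(\pi) \prod_{v \in S} A_{v,\pi(v)},
\]
and using that $A$ has zero diagonal, only derangements $\pi$ can contribute. The product equals $1$ precisely when $\{v,\pi(v)\}$ is an edge of $G$ for every $v \in S$, so the surviving pairs $(S,\pi)$ are exactly those whose permutation cycle structure sits inside $G$: each $2$-cycle of $\pi$ is an edge of $G$, and each $k$-cycle of $\pi$ for $k \ge 3$ is a $k$-cycle in $G$.

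Next, I would group the surviving terms by the elementary subgraph $H \subseteq G$ that a pair $(S,\pi)$ determines on $S$. For a fixed copy of $H \in H_i$ in $G$, the permutations $\pi$ realizing it are obtained by independently orienting each component: an edge component admits a unique transposition, while a cycle component of length $\ge 3$ admits exactly two cyclic orientations. Hence there are $2^{z(H)}$ such permutations, each a product of $c(H)$ disjoint cycles on an $i$-element set, and therefore each of sign $(-1)^{i-c(H)}$.

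Finally, I would collect contributions. Multiplying the $(-1)^i$ from the principal-minor identity by $(-1)^{i-c(H)}$ gives $(-1)^{c(H)}$, so summing over all copies of $H$ in $G$ and over all $H \in H_i$ yields
\[
c_i = \sum_{H \in H_i} (-1)^{c(H)} 2^{z(H)} [\# H \subseteq G],
\]
as claimed. The whole argument is really a single combinatorial dictionary between derangements on an $i$-subset of $V(G)$ whose $2$-cycles and longer cycles all lie in $G$ and elementary subgraphs of $G$ on $i$ vertices; the only real work is the sign bookkeeping and the factor-of-two accounting for orientations of long-cycle components, which I expect to be the most error-prone step but not a genuine conceptual obstacle.
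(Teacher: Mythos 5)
Your proof is correct, and it is the standard textbook argument (the one in Biggs): expand $\det(\lambda I - A)$ into principal minors, apply Leibniz, discard permutations with fixed points, and match the surviving derangements on an $i$-subset with elementary subgraphs, with the sign $(-1)^i\cdot(-1)^{i-c(H)}=(-1)^{c(H)}$ and the factor $2^{z(H)}$ from the two orientations of each long cycle all accounted for properly. The paper takes a genuinely different route: it never touches the Leibniz expansion, but instead derives Theorem~\ref{T:Harary} in Section~7 as the $k=2$ specialization of Theorem~\ref{T:codegreeFormula}, which expresses $\phi_d$ as a sum over Veblen \emph{multi}-graphs with $d$ edges rather than over simple elementary subgraphs on $d$ vertices. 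The work there is to show (Lemma~\ref{L:HS}) that the contributions of all Veblen graphs that are not flattenings of single cycles cancel, which the authors do by computing $C_G=|\mathfrak{E}(G)|/\prod_e m(e)!$ via the BEST theorem (Lemma~\ref{L:graphcoef}) and then running an inclusion--exclusion over decompositions of Euler circuits, invoking Veblen's theorem to conclude that a non-cycle Veblen graph has no indecomposable Euler circuit. Your argument is far more elementary and self-contained, but it is intrinsically tied to the determinant being a sum over permutations and so does not generalize to the resultant-based characteristic polynomial of a hypergraph; the paper's argument is heavier machinery for the $k=2$ case, but it is exactly the machinery that survives the passage to $k>2$, and the comparison between the two is what motivates their notion of the coefficient threshold at the end of the paper.
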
 
The goal of the present paper is to provide an analogous result for the characteristic polynomial of a hypergraph.  The full result is given in Theorem \ref{T:codegreeFormula}, but to state here simply: fix $k \geq 2$ and let $H_d$ denote the set of $k$-valent (i.e., $k$ divides the degree of each vertex) $k$-uniform multi-hypergraphs on $d$ edges.  For a $k$-uniform hypergraph $\cal H$ the codegree-$d$ coefficient (i.e., the coefficient of $x^{\deg - d}$) of the characteristic polynomial of the $n$-vertex hypergraph ${\cal H}$ can be written
\[
c_d = \sum_{H \in H_d}(-(k-1)^n)^{c(H)}C_H(\# H \subseteq {\cal H})
\]
where $c(H)$ is the number of components of $H$, $C_H$ is a constant depending only on $H$, and $(\# H \subseteq {\cal H})$ is the number of times $H$ occurs (in a certain sense that is a minor generalization of the subgraph relation) in ${\cal H}$.

The quantity $(\# H \subseteq {\cal H})$ is straightforward to compute.  However, computing $C_H$ is more complicated.  This notion of an \emph{associated coefficient of a hypergraph} first appeared in \cite{Coo}, where the authors provide a combinatorial description of the codegree $k$ and codegree $k+1$ coefficient, denoted $c_k$ and $c_{k+1}$ respectively, for the normalized adjacency characteristic polynomial of a $k$-uniform hypergraph. 
\begin{theorem}
\label{T:Codegree-k}
\cite{Coo} Let ${\cal H}$ be a $k$-uniform hypergraph.  Then
\[
c_k=-k^{k-2}(k-1)^{n-k}|E({\cal H})|
\]
and
\[
c_{k+1} = -C_k (k-1)^{n-k}(\#\text{ of simplices in ${\cal H}$}),
\]
 where $C_k$ is some constant depending on $k$.
\end{theorem}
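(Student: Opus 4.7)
The plan is to apply the main result of this paper, Theorem~\ref{T:codegreeFormula}, specialized to $d=k$ and $d=k+1$. In each case the task reduces to (i) classifying the $k$-valent $k$-uniform multi-hypergraphs on $d$ edges and (ii) evaluating the associated coefficient $C_H$ prescribed by the main theorem.

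For $c_k$, I would first establish combinatorially that the only $k$-valent $k$-uniform multi-hypergraph on $k$ edges is the ``fat edge'' formed by $k$ parallel copies of a single $k$-edge. Indeed, by $k$-valency every non-isolated vertex has degree a positive multiple of $k$; but with only $k$ edges available no degree can exceed $k$, so every such vertex has degree exactly $k$ and therefore lies in every edge. Since edges have size $k$, the support has exactly $k$ vertices and all edges coincide. Disconnected examples are ruled out because any $k$-valent component by itself requires at least $k$ edges. The number of occurrences of this fat edge in $\mathcal{H}$ is exactly $|E(\mathcal{H})|$, so Theorem~\ref{T:codegreeFormula} gives $c_k = -(k-1)^n C_{\text{fat}} |E(\mathcal{H})|$. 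It remains to evaluate $C_{\text{fat}}$, which the main theorem expresses as a weighted sum over rooted labeled trees on the $k$-vertex support; Cayley's formula supplies the factor $k^{k-2}$, and regrouping the powers of $(k-1)$ yields the claimed $-k^{k-2}(k-1)^{n-k}|E(\mathcal{H})|$.

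For $c_{k+1}$, an analogous degree-count argument shows that any $k$-valent $k$-uniform multi-hypergraph with $k+1$ edges has exactly $k+1$ vertices each of degree $k$, with each edge omitting a unique vertex (so the map ``vertex $\mapsto$ the unique edge missing it'' is a bijection onto the edge set). This forces the multi-hypergraph to be, up to isomorphism, the $k$-uniform simplex $K_{k+1}^{(k)}$, while disconnected candidates are excluded by the same threshold argument as above. Applying Theorem~\ref{T:codegreeFormula} then yields the stated form, with $C_k$ identified as $(k-1)^k \, C_{\text{simplex}}$.

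The principal obstacle is the explicit evaluation of $C_{\text{fat}}$ and, especially, $C_{\text{simplex}}$. The fat-edge case is tractable once one unpacks the main theorem's combinatorial description of $C_H$: the sum collapses to an enumeration of rooted labeled trees on $k$ points, producing the Cayley factor $k^{k-2}$. The simplex case involves a substantially more intricate sum over the rooted labeled structures (the ``rooted Veblen-type hypergraphs'' that will be central to later sections), and obtaining a clean closed form for $C_k$ appears to require the more refined machinery developed in the main theorem rather than a short direct computation; this is why the original statement leaves $C_k$ unspecified beyond its dependence on $k$.
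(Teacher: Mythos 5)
Your derivation is correct, and it is essentially the route the paper itself takes: although Theorem~\ref{T:Codegree-k} is quoted from \cite{Coo} without proof, the paper re-derives exactly these coefficients from Theorem~\ref{T:codegreeFormula} in Sections 5--7, using the same two ingredients you identify --- the classification showing that the only Veblen $k$-graph with $k$ edges is the edge of multiplicity $k$ (with $C_H = k^{k-2}/(k-1)^k$ via Cayley's formula, since $\prod_v \deg^-(v) = (k-1)^k$) and the only one with $k+1$ edges is $K_{k+1}^{(k)}$, together with the normalization $C_k = (k-1)^k C_{K_{k+1}^{(k)}}$ stated in the remark after Theorem~\ref{T:simplexFormula}. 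The argument is non-circular, since Theorem~\ref{T:codegreeFormula} is proved from the trace formula independently of Theorem~\ref{T:Codegree-k}.
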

This idea was further studied by Shao, Qi, and Hu where the authors prove (restating Theorem 4.1 of \cite{Sha}),
\[
c_d = (k-1)^{n-1} \sum_{D \in {\bf D}} f_D|\mathfrak{E}(D)|
\]
where ${\bf D}$ is a certain large family of digraphs, $f_D$ is a function of $D$ and $\mathfrak{E}(D)$ is the set of Euler circuits in $D$.  The authors then use their formula to provide a general description of $\Tr_2(T)$ and $\Tr_3(T)$ for a general tensor $T$.  Our first few results are similar to that of \cite{Sha} (as described in more detail below), and we use them to provide an explicit combinatorial description of $H_D$ and the resulting $C_H$. 

Here we present some requisite background maintaining the notation of \cite{Coo}. A {\em (cubical) hypermatrix} $\A$ over a set $\mathbb{S}$ of {\em dimension} $n$ and {\em order} $k$ is a collection of $n^k$ elements $a_{i_1i_2\dots i_k} \in \mathbb{S}$ where $i_j \in [n]$.   A hypermatrix is said to be {\em symmetric} if entries with identical multisets of indices are the same.  That is, $\A$ is symmetric if $a_{i_1i_2\dots i_k} = a_{i_{\sigma(1)}i_{\sigma(2)}\dots i_{\sigma(k)}}$ for all permutations $\sigma$ of $[k]$.  An order $k$ dimension $n$ symmetric hypermatrix $\A$ uniquely defines a homogeneous degree $k$ polynomial in $n$ variables (a.k.a.~a ``$k$-form'') by 
\[
F_{\A}({\bf x}) = \sum_{i_1, i_2, \dots, i_k =1}^na_{i_1i_2\dots i_k} x_{i_1}x_{i_2}\dots x_{i_k}.
\]
 If we write $\x^{\otimes r}$ for the order $r$ dimension $n$ hypermatrix with $i_1, i_2, \dots, i_k$ entry $x_{1_1}x_{i_2} \dots x_{i_r}$ and $x^r$ for the vector with $i$-th entry $x_i^r$ then the above expression can be written as
\[
\A\x^{\otimes k-1} = \lambda \x^{k-1}
\] where the multiplication denoted by concatenation is tensor contraction. Call $\lambda \in \mathbb{C}$ an {\em eigenvalue} of $\A$ if there is a non-zero vector ${\bf x} \in \mathbb{C}^n$, which we call an {\em eigenvector}, satisfying 
\[
\sum_{ i_2, i_3, \dots, i_k =1}^na_{ji_2\dots i_k} x_{i_1}x_{i_2}\dots x_{i_k} = \lambda x_j^{k-1}.
\]
Next we offer an important result from commutative algebra to proceed the definition of the adjacency characteristic polynomial of a hypergraph. 
\begin{theorem}
\label{t:Resultant}
(The Resultant, \cite{Gel}) Fix degrees $d_1, d_2, \dots, d_n$.  For $i \in [n]$, consider all monomials $\x^{\alpha}$ (where $\alpha$ is itself a vector) of total degree $d_i$ in $x_1, \dots, x_n$.  For each such monomial, define a variable $u_{i, \alpha}$.  Then there is a unique polynomial $\textsc{res} \in \mathbb{Z}[\{u_{i, \alpha}\}]$ with the following three properties:
\begin{enumerate}
\item{If $F_1, \dots, F_n \in \mathbb{C}[x_1, \dots, x_n]$ are homogeneous polynomials of degrees $d_1, \dots, d_n$ respectively, then the polynomials have a non-trivial common root in $\mathbb{C}^n$ exactly when $\textsc{res}(F_1, \dots, F_n)=0$.  Here, $\textsc{res}(F_1, \dots, F_n)$ is interpreted to mean substituting the coefficient of $\x^\alpha$ in $F_i$ for the variable $u_{i,\alpha}$ in $\textsc{res}$.}
\item{$\textsc{res}(x_1^{d_1}, \dots, x_n^{d_n}) = 1$.}
\item{$\textsc{res}$ is irreducible, even in $\mathbb{C}[\{u_{i,\alpha}\}]$.}
\end{enumerate}
Moreover, for $i \in [n]$, $\textsc{res}$ is homogeneous in the variable $\{u_{i,\alpha}\}$ with degree $\prod_{j \in[n], j \neq i} d_i$.
\end{theorem}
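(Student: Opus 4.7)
My plan is to construct the resultant as the (scaled) defining polynomial of an irreducible hypersurface in a product of projective spaces and then verify each of the three listed properties together with the multidegree formula.

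First I would set up the \emph{incidence variety}
\[
W = \{([F_1],\ldots,[F_n],[x]) : F_i(x) = 0 \text{ for all } i\}
\]
inside $\mathbb{P}(V_1) \times \cdots \times \mathbb{P}(V_n) \times \mathbb{P}^{n-1}$, where $V_i$ denotes the space of homogeneous polynomials of degree $d_i$ in $x_1,\ldots,x_n$. Projecting $W$ onto the last factor realizes it as a product of hyperplane bundles over $\mathbb{P}^{n-1}$: the fiber over $[x]$ is $\prod_i H_{i,[x]}$, where $H_{i,[x]} \subset \mathbb{P}(V_i)$ is the hyperplane of forms vanishing at $[x]$. Thus $W$ is irreducible and smooth of dimension $\sum_i(\dim V_i - 2) + (n-1) = \sum_i \dim V_i - n - 1$.

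Next I would project $W$ onto $P := \mathbb{P}(V_1) \times \cdots \times \mathbb{P}(V_n)$; the image $Z$ is an irreducible closed subvariety consisting exactly of tuples with a common projective root, which is property~(1). To upgrade $Z$ to a hypersurface I would exhibit a single system with a zero-dimensional common-root locus: for instance the system with $F_i = x_i^{d_i}$ for $i<n$ and $F_n$ any degree-$d_n$ form with no $x_n^{d_n}$ term has $[0:\cdots:0:1]$ as its unique common projective root, so the fiber-dimension theorem gives $\dim Z = \dim W = \dim P - 1$. An irreducible hypersurface in a product of projective spaces is the vanishing locus of a multi-homogeneous irreducible polynomial $R$, unique up to scalar; integrality of its coefficients follows from running the construction over $\mathbb{Z}$ via classical elimination theory (e.g., Macaulay's determinantal formula), yielding property~(3). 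For property~(2) I would observe that $F_i = x_i^{d_i}$ has no nontrivial common root, so $(x_1^{d_1},\ldots,x_n^{d_n}) \notin Z$ and $R$ evaluated there is a nonzero integer; dividing through by this integer normalizes the value to $1$ and yields the unique $\textsc{res}$.

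For the multidegree, fix generic $F_j$ for $j \neq i$; by Bezout's theorem these $n-1$ equations meet in $\prod_{j \neq i} d_j$ simple projective points, and the slice of $Z$ over this choice is the union of the $\prod_{j \neq i} d_j$ distinct hyperplanes in $\mathbb{P}(V_i)$ given by vanishing at each such point. Hence $\deg_{u_i} \textsc{res} = \prod_{j \neq i} d_j$, matching the claim (modulo the evident typo $d_i$-for-$d_j$ in the statement). The main obstacle is the hypersurface claim $\dim Z = \dim P - 1$: one must produce a witness system with a finite common-root locus and invoke semicontinuity of fiber dimension to conclude that the generic fiber of $W \to Z$ is zero-dimensional; without this step, $Z$ could a priori have higher codimension and no single defining polynomial would exist.
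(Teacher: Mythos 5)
The paper does not prove this statement: it is quoted verbatim as a classical background theorem and attributed to the reference \cite{Gel} (Gelfand--Kapranov--Zelevinsky), so there is no in-paper argument to compare yours against. Your sketch is, in essence, the standard proof from that reference: the incidence variety $W$ fibered over $\mathbb{P}^{n-1}$ with fibers a product of hyperplanes (hence irreducible of dimension $\dim P - 1$), the projection to $P$ with image $Z$, a witness system with finite common-root locus plus semicontinuity of fiber dimension to force $\dim Z = \dim P - 1$, and the Bezout slice to read off the multidegree. Your witness $F_i = x_i^{d_i}$ for $i<n$ with $F_n$ missing the $x_n^{d_n}$ term is correct, and you are also right that the displayed multidegree $\prod_{j\neq i} d_i$ in the statement is a typo for $\prod_{j\neq i} d_j$.

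One step deserves more care. After producing the primitive irreducible integral defining polynomial $R$ of $Z$, you ``divide through by'' the integer $R(x_1^{d_1},\ldots,x_n^{d_n})$ to enforce property (2). That division lands back in $\mathbb{Z}[\{u_{i,\alpha}\}]$ only if that integer is $\pm 1$; if it were, say, $5$, primitivity of $R$ would forbid dividing every coefficient by it. So you need the separate (true, but not free) fact that the primitive integral defining polynomial evaluates to $\pm 1$ at the monomial system --- obtainable from Macaulay's determinantal formula or from the Poisson product formula --- before the normalization in (2) is compatible with integrality. With that supplied, the argument is complete and matches the cited source's approach.
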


\begin{definition} (\cite{Qi})
The symmetric hyperdeterminant of $\A$, denoted $\det(\A)$ is the resultant of the polynomials which comprise the coordinates of $\A x^{\otimes k-1}$.  Let $\lambda$ be an indeterminate.  The characteristic polynomial $\phi_\A(\lambda)$ of a hypermatrix $\A$ is $\phi_\A(\lambda) = \det(\lambda {\cal I} - \A)$.  
\end{definition}
We consider the normalized adjacency matrix of a $k$-uniform hypergraph, ${\cal H} = (V,E)$.  We refer to such hypergraphs as \emph{$k$-graphs} and we reserve the language of \emph{graph} for the case of $k=2$.  For a $k$-graph ${\cal H} = ([n],E)$ we denote the \emph{(normalized) adjacency hypermatrix} ${\cal A}_{\cal H}$ to be the order $k$ dimension $n$ hypermatrix with entries 
\begin{displaymath}
   a_{i_1,i_2, \dots, i_k} = \frac{1}{(k-1)!} \left\{
     \begin{array}{ll}
       1 & : \{i_1, i_2, \dots, i_k\} \in E(H)\\
       0 & : \text{otherwise.}
     \end{array}
   \right.
\end{displaymath} 
For simplicity, we denote $\phi({\cal H}) = \phi_{{\cal A}_{\cal H}}(\lambda)$ and write
\[
\phi({\cal H}) = \sum_{i=0}^{t}c_i \lambda^{t-i}
\]
where $t = n(k-1)^{n-1}$ by Theorem \ref{t:Resultant}.  Throughout, we make use of the notation $\phi_d({\cal H}) = c_d$ for the codegree-$d$ coefficient of $\phi({\cal H})$.  

Our approach relies on the following trace formula for the hyperdeterminant of a tensor.  In \cite{Mor}, Morozov and Shakirov give a formula for calculating $\det({\cal I} - {\cal A})$ using Schur polynomials in the generalized traces of the order $k$, dimension $n$ hypermatrix ${\cal A}$.
Let $f: \C^n \to C^n$ be a linear map and let $I$ be the unity map, $I = (x_1, x_2, \dots, x_n)^T \to (x_1, x_2, \dots, x_n)^T$.  Famously, 
\[
\log \det(I-f) = \tr \log(I - f) = -\sum_{k=1}^\infty \frac{\tr(f^k)}{k}.
\]
The characteristic polynomial is defined as the resultant of a certain system of equations, so calculating the characteristic polynomial requires computation of the resultant.  Moroz and Shakirov give a formula for calculating $\det({\cal I} - \A)$ using \emph{Schur polynomials} in the generalized traces of the order $k$, dimension $n$ hypermatrix $\A$.  
\begin{definition}
Define the $d$-th Schur polynomial $P_d \in \mathbb{Z}[t_1, \dots, t_d]$ by $P_0 = 1$ and, for $d > 0$, 
\[
P_d(t_1, \dots, t_d) = \sum_{m=1}^d \sum_{d_1 + \dots + d_m = d} \frac{t_{d_1}\cdots t_{d_m}}{m!}.
\]
  More compactly, one may define $P_d$ by 
\[
\exp\left(\sum_{d=1}^\infty t_dz^d \right) = \sum_{d=1}^\infty P_d(t_1, \dots, t_d)z^d.
\]
\end{definition}
Let $f_i$ denote the $i$th coordinate of $\A\x^{\otimes k-1}$.  Define $A$ to be an auxiliary $n \times n$ matrix with distinct variables $A_{ij}$ as entries.  For each $I$, we define the differential operator 
\[
\hat f_i = f_i\left(\frac{\partial}{\partial A_{i1}}, \frac{\partial}{\partial A_{i2}}, \dots , \frac{\partial}{\partial A_{in}}\right)
\]
in the natural way.  In \cite{Coo}, Cooper and Dutle use the aforementioned Morozov-Shakirov formula to show that the $d$-th trace of $\cal{A}_H$, 
\begin{equation}
\label{E:Morozov}
\Tr_d({\cal A_{H}}) = (k-1)^{n-1} \sum_{d_1 + \dots +d_n = d} \left(\prod_{i=1}^n \frac{\hat{f_i}^{d_i}}{(d_i(k-1))!} \tr(A^{d(k-1)})\right)
\end{equation}
where $\tr(A^{d(k-1)})$ is the standard matrix trace (for a more detailed explanation, see \cite{Coo}).  We prove Theorem \ref{T:codegreeFormula} with the aid of the following reformulation of Equation \ref{E:Morozov}:
\[
\Tr_d({\cal A_{H}}) = (k-1)^n \sum_{H \in H_d}C_H(\#H \subseteq {\cal H}).
\]

The paper is arranged as follows.  In the following section we define the associated digraph of an operator in $\Tr_d({\cal H})$, as in Equation \ref{E:Morozov}.  In particular, we provide a formula for a summand of $\Tr_d({\cal H})$ in terms of associated digraphs.  In Section 3 we give a combinatorial description of the differential operators which have non-zero contribution to $\Tr_d({\cal H})$ and characterize these operators in terms of associated digraphs.  In Section 4 we show that associated digraphs correspond to a particular type of hypergraph, termed Veblen hypergraphs. We conclude with a proof of our main result. 


\section{The associated digraph of an operator}
Recall the $d$-th trace of $\cal{A}_H$ from Equation \ref{E:Morozov}, 
\[
\Tr_d({\cal A_{H}}) = (k-1)^{n-1} \sum_{d_1 + \dots +d_n = d} \left(\prod_{i=1}^n \frac{\hat{f_i}^{d_i}}{(d_i(k-1))!} \tr(A^{d(k-1)})\right)
\] where $\tr(A^{d(k-1)})$ is the standard matrix operation.  Let $\hat f_{d_1, d_2, \dots, d_n}$ be an addend of $\prod_{i=1}^n \hat f_i^{d_i}$ in $\Tr_d({\cal A_{H}})$.  When the context is clear we suppress the subscript and simply write $\hat f$.  Given $\alpha = (i_1, i_2, \dots, i_{d(k-1)})$ let 
\begin{equation}
\label{E:alpha}
A_\alpha := A_{i_1, i_2}A_{i_2, i_3}\dots A_{i_{d(k-1)-1}, i_{d(k-1)}} A_{i_{d(k-1), i_1}}\end{equation}  and recall that 
\[
\tr(A^{d(k-1)}) = \sum_{\alpha} A_\alpha
\]
where the factors of $A_\alpha$ are commutative. Adhering to the terminology of \cite{Coo} we say $A_\alpha$ is $k$-valent if $k$ divides the number of times $i$ occurs in a subscript of $A_{\alpha}$.   We utilize divisibility notation for monomials in $\tr(A^{d(k-1)})$, e.g., using $g|h$ to denote that $g$ occurs as a factor of the formal product $h$.  We say that \emph{$A_\alpha$ survives $\hat f$} if $\hat f A_{\alpha} \neq 0$.
\begin{definition}
For a differential operator $\hat f_{d_1, d_2, \dots, d_n}$ the associated digraph of $\hat f$, denoted $D_{\hat f}$, is the directed multigraph where there are $d_i$ distinguishable edges directed from $i$ to $j$ given $\left(\frac{\partial}{\partial A_{i,j}}\right)^{d_i} \mid \hat f$ and isolated vertices are ignored. 
\end{definition}
We suppress the subscript and write $D$ when $\hat f$ is understood.
We recall the following graph theoretic definitions according to \cite{Die}. 
\begin{definition}
A \emph{walk} in a graph $G$ is a non-empty alternating sequence $v_0e_0v_1e_1\dots e_{k-1}v_k$ of vertices and edges in $G$ such that $e_i = \{v_i, v_{i+1}\}$ for all $i < k$.  A walk is \emph{closed} if $v_0 = v_k$.  A closed walk in a graph is an \emph{Euler tour} if it traverses every edge of the graph exactly once.  An \emph{Euler circuit} is an Euler tour up to cyclic permutation of its edges, i.e., an Euler tour with no distinguished beginning.  
\end{definition}
We denote the set of Euler tours of a graph $G$ which begin at the edge $e \in E(G)$  by $\mathfrak{E}_{e}(G)$ and we denote the set of Euler circuits of $G$ by $\mathfrak{E}(G)$.   Recall that a  digraph $D$ has an Euler circuit if and only if $\deg^+(v) = \deg^-(v)$ for all $v \in V(D)$ and $D$ is weakly connected. 
\begin{definition}
Let $w = (v_i)_{i=0}^m$ be a sequence of (not necessarily distinct) vertices of $D$.  We say that \emph{$w$ describes an Euler tour in $D$} if there exist distinct edges $e_0, \dots, e_{m-1}$ such that $v_0e_0v_1e_1\dots e_{m-1}v_m$ is an Euler tour in $D$.  Moreover we say that such Euler tours are \emph{described by $w$}.
\end{definition}
Note that the use of Euler tour in the previous definition is well-founded as $e_0$ is distinguished as the first edge. 
\begin{lemma}
\label{L:opForm}
Consider $\Tr_d({\cal H})$, $\hat f \tr(A^{d(k-1)})\neq 0$ if and only if $D$ is Eulerian.  In this case 
\begin{equation}
\label{E:Eulerian}
\hat f \tr(A^{d(k-1)}) = |E(D)||\mathfrak{E}(D)|.
\end{equation}
\end{lemma}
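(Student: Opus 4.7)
The plan is to evaluate $\hat f \tr(A^{d(k-1)})$ monomial by monomial and read off both the support condition and the numerical value at the same time. Write $\hat f = \prod_{(i,j)} (\partial/\partial A_{ij})^{m_{ij}}$, where $m_{ij}$ is the multiplicity of the directed edge $(i,j)$ in $D$, so $\sum_{(i,j)} m_{ij} = |E(D)| = d(k-1)$. Each summand of $\tr(A^{d(k-1)}) = \sum_\alpha A_\alpha$ is, by (\ref{E:alpha}), a monomial $A_\alpha = \prod_{(i,j)} A_{ij}^{n_{ij}(\alpha)}$ of total degree $d(k-1)$, where $n_{ij}(\alpha)$ counts the steps $i \to j$ in the closed walk encoded by $\alpha$. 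Because $\hat f$ is a differential operator of order $d(k-1)$ on independent commuting variables and the degrees of $\hat f$ and $A_\alpha$ agree, $\hat f A_\alpha$ vanishes unless $n_{ij}(\alpha) = m_{ij}$ for every $(i,j)$, in which case $\hat f A_\alpha = \prod_{(i,j)} m_{ij}!$. So the surviving $\alpha$'s are precisely the vertex sequences of closed walks that traverse each edge of $D$ with exactly the right multiplicity.

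Next I would count these surviving $\alpha$'s by relating them to Euler tours of $D$ with fully distinguishable parallel edges. The forgetful map sending an Euler tour $v_0 e_0 v_1 \cdots e_{d(k-1)-1} v_0$ to its vertex sequence $\alpha = (v_0, \ldots, v_{d(k-1)-1})$ is $\prod_{(i,j)} m_{ij}!$--to--one, since given a valid $\alpha$ one may independently permute the $m_{ij}$ parallel copies of $(i,j)$ among the $m_{ij}$ positions in $\alpha$ that demand an $i \to j$ step. Combining this factor-of-$\prod m_{ij}!$ fiber count with the per-$\alpha$ contribution $\prod m_{ij}!$ from the first step, the factorials cancel and $\hat f \tr(A^{d(k-1)}) = \#\{\text{Euler tours of } D\}$.

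To finish, I would pass from Euler tours to Euler circuits. Since all edges of $D$ are distinguishable, the $|E(D)|$ cyclic shifts of any Euler tour yield pairwise distinct tours (the edge in position $0$ differs for any nontrivial shift), so each Euler circuit lifts to exactly $|E(D)|$ tours and $\#\{\text{Euler tours of } D\} = |E(D)| \cdot |\mathfrak{E}(D)|$, as required. The ``if and only if'' is then immediate: the right-hand side is positive precisely when $|\mathfrak{E}(D)| > 0$, which by the standard criterion (weak connectivity together with $\deg^+ = \deg^-$ at every vertex) is equivalent to $D$ being Eulerian. The main bookkeeping hurdle is keeping the factor $\prod m_{ij}!$ straight: it appears once from differentiating repeated variables and once as its inverse from collapsing distinguishable edges down to bare vertex sequences, and the clean final formula hinges on these cancelling exactly.
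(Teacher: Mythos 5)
Your proposal is correct and follows essentially the same route as the paper's proof: the degree-matching argument gives $\hat f A_\alpha = \prod m_{ij}!$ exactly when the step multiplicities of $\alpha$ match the edge multiplicities of $D$, the $\prod m_{ij}!$-to-one fiber count identifies the total with the number of Euler tours, and the factor $|E(D)|$ converts tours to circuits. The only cosmetic difference is that you obtain the ``only if'' direction from positivity of the tour count rather than arguing directly that a surviving $A_\alpha$ forces balanced degrees and connectivity, which is equally valid.
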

\begin{proof}
Consider $\Tr_d({\cal H})$.  Fix a term $A_\alpha$ of $\tr(A^{d(k-1)})$ and a differential operator $\hat f$ of $\prod_{i=1}^n \hat f_i^{d_i}$.  Suppose $\hat f A_{\alpha} \neq 0$.  Whence $\hat f A_{\alpha} \neq 0$ the factors of $\hat f$ are in one-to-one correspondence with the factors of $A_\alpha$.  It follows that the edges of $D_{\hat f}$ are in one-to-one correspondence with the factors of $A_\alpha$.  Notice that for $i \in V(D)$, $\deg^+(i)  = \deg^-(i)$ by Equation \ref{E:alpha}. Further, $D$ is strongly connected as the sequence of indices from $i$ to $j$ (cyclically, if necessary) is a walk from vertex $i$ to vertex $j$.  Therefore, $D$ is Eulerian. 

Suppose now that $D$ is Eulerian and let $\alpha = (v_i)_{i=0}$ describe an Euler tour in $D$.  We claim that $\hat f A_{\alpha}$ is equal to the number of Euler tours in $D_{\hat f}$ described by $\alpha$.  Let $m(i,j)$ denote the number of edges from $i$ to $j$ in $D$.  Notice that 
\[
A_\alpha = \prod_{i,j \in V(D)} A_{i,j}^{m(i,j)} \text{ and } \hat f = \prod_{i,j \in V(D)} \frac{\partial^{m(i,j)}}{\partial A^{m(i,j)}_{i,j}}.
\]
 Clearly 
\[
\hat f A_\alpha = \prod_{i,j \in V(D)}m(i,j)!.
\]
Moreover, $\alpha$ describes $\prod_{i,j \in V(D)}m(i,j)!$ Euler tours in $D$ by straightforward enumeration.  Observe that there are $|E(D)||\mathfrak{E}(D)|$ Euler tours in $D$ as we may distinguish any edge from $D$ as the first edge of an Euler circuit.  Thus, 
\[
\hat f \tr(A^{d(k-1)}) = \sum_\alpha \hat f A_\alpha = |E(D)||\mathfrak{E}(D)|
\]
 as every Euler tour is described by exactly one $\alpha$.  
\end{proof}

We conclude this section with a remark about the evaluation of Equation \ref{E:Eulerian}.  Conveniently, $|\mathfrak{E}(D)| $ can be computed using the BEST theorem, originally appearing in \cite{Aar} as a variation of a result of \cite{Tut}.
\begin{theorem}
\label{T:BEST}
(BEST Theorem) The number of Euler circuits in a connected Eulerian graph $G$ is 
\[
|\mathfrak{E}(G)| = \tau(G) \prod_{v \in V} (\deg(v) -1)!
\]
 where $\tau(G)$ is the number of arborescences (i.e., the number of rooted subtrees of $G$ with a specified root).
\end{theorem}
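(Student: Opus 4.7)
The plan is to prove the BEST theorem via the classical ``last-exit'' bijection, counting Euler tours that start with an edge out of a fixed root $r \in V(G)$ in two ways.  I claim such Euler tours are in bijection with pairs $(T, \sigma)$, where $T$ is an arborescence of $G$ rooted at $r$ (all edges oriented toward $r$) and $\sigma = (\sigma_v)_{v \in V}$ assigns to each vertex $v$ a linear order on its out-edges, subject to the constraint that for every $v \neq r$ the unique $T$-edge leaving $v$ is last in $\sigma_v$.

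For the forward direction, given an Euler tour $W$ starting at $r$, let $\sigma_v$ record the temporal order in which $W$ uses the out-edges of $v$ and let $T$ consist of the $\sigma_v$-last out-edge at each $v \neq r$.  Then $T$ is acyclic: if $T$ contained a directed cycle $v_1 \to \cdots \to v_\ell \to v_1$ (necessarily avoiding $r$, which has no $T$-out-edge), choose the $v_i$ whose last exit along its $T$-edge occurs latest in $W$; immediately after that instant the walk is at $v_{i+1}$ with all its out-edges already used and $v_{i+1} \neq r$, contradicting continuation of $W$.  Since $T$ has $|V|-1$ edges, one per non-root, and is acyclic, it is a spanning arborescence rooted at $r$.

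For the reverse direction, given $(T, \sigma)$, build a walk from $r$ by always leaving the current vertex via the next unused out-edge in $\sigma$.  Because $\deg^+(v) = \deg^-(v)$ everywhere, a standard parity argument shows the walk can only terminate at $r$.  Suppose for contradiction it terminates with some $v$ having an unused out-edge; then $v \neq r$, and since $\sigma_v$ puts the $T$-edge $v \to v'$ last, that $T$-edge is unused.  Consequently $v'$ has an unused in-edge, hence (if $v' \neq r$) an unused out-edge, hence an unused $T$-out-edge, and so on up the $T$-path from $v$; this path ends at $r$, forcing $r$ to have an unused in-edge, but termination at $r$ forces used in-degree to equal used out-degree $= \deg(r)$ at $r$, a contradiction.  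Thus the construction produces an Euler tour, completing the bijection.

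Counting pairs $(T,\sigma)$ gives $\tau(G) \cdot (\deg(r))! \cdot \prod_{v \neq r}(\deg(v) - 1)!$ Euler tours starting with an out-edge of $r$.  Each Euler circuit of $G$ produces exactly $\deg(r)$ such tours, one per visit of the circuit to $r$, yielding
\[
|\mathfrak{E}(G)| = \tau(G) \prod_{v \in V}(\deg(v) - 1)!.
\]
The main obstacle is the ``walk cannot get stuck'' step in the reverse direction, which requires the careful inductive propagation of unused edges along the $T$-path back to $r$; once that is in hand, the forward acyclicity argument and the final double-counting are routine.
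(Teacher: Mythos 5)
The paper does not prove Theorem \ref{T:BEST} at all --- it is quoted as a known result of van Aardenne-Ehrenfest and de Bruijn, building on Tutte, so there is no internal proof to compare yours against. Your argument is the standard and correct ``last-exit'' bijection: the forward acyclicity argument, the propagation of unused edges along the $T$-path in the reverse direction, and the final double count (dividing the $\tau(G)\,\deg(r)!\,\prod_{v\neq r}(\deg(v)-1)!$ rooted tours by the $\deg(r)$ tours per circuit) are all sound. One small point worth making explicit: although the paper states the theorem for a ``connected Eulerian graph'' with $\deg(v)$, it is applied in Corollary \ref{C:opForm} to Eulerian \emph{digraphs} with $\deg^-(v)$, and your proof correctly treats the digraph version (in-arborescences oriented toward the root, $\deg^+=\deg^-$), which is the form actually needed; the independence of the arborescence count from the choice of root $r$ then falls out of your final identity, since the left-hand side does not depend on $r$.
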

For simplicity we abbreviate $\tau(f) = \tau(D_{\hat f})$.  Combining the BEST theorem and the observation that $|E(D)| = d(k-1)$ yields the following.
\begin{corollary}
\label{C:opForm}
\[
\hat f \tr(A^{d(k-1)}) = d(k-1) \tau(f) \prod_{v \in V(D)}(\deg^-(v) -1)!
\]
\end{corollary}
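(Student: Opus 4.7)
The plan is to read the corollary directly off of Lemma \ref{L:opForm} combined with the BEST Theorem and a one-line edge count for $D$. Lemma \ref{L:opForm} already identifies $\hat f \tr(A^{d(k-1)})$ with $|E(D)||\mathfrak{E}(D)|$ in the Eulerian case; in the non-Eulerian case both sides of the claimed equality vanish (the left by Lemma \ref{L:opForm}, and the right because a non-Eulerian digraph has no spanning arborescences with a chosen root from which every other vertex is reachable, so $\tau(f)=0$), so that case is automatic.

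Next I would pin down $|E(D)|$. Since each $f_i$ is the $i$th coordinate of $\A\x^{\otimes k-1}$, it is a homogeneous degree $k-1$ polynomial in $x_1,\dots,x_n$, and therefore $\hat f_i$ is a differential operator of order $k-1$ in the entries $A_{ij}$. Thus $\hat f=\hat f_{d_1,\dots,d_n}$ has total order $\sum_i d_i(k-1)=d(k-1)$. By the construction of $D_{\hat f}$, each factor $\partial/\partial A_{i,j}$ appearing in $\hat f$ contributes exactly one directed edge from $i$ to $j$, so $|E(D)|=d(k-1)$.

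The final step is to apply the BEST Theorem (Theorem \ref{T:BEST}) to $D$, using that $D$ is Eulerian so that $\deg^-(v)=\deg^+(v)$ for every $v\in V(D)$. This gives
\[
|\mathfrak{E}(D)| = \tau(D)\prod_{v\in V(D)}(\deg^-(v)-1)! = \tau(f)\prod_{v\in V(D)}(\deg^-(v)-1)!.
\]
Substituting this together with $|E(D)|=d(k-1)$ into the conclusion of Lemma \ref{L:opForm} yields the stated identity.

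No step is a serious obstacle; the only point worth double-checking is that the combinatorial definition of $D_{\hat f}$ really does record every partial derivative appearing in $\hat f$ as a single directed edge, which is immediate from the definition. In effect, the corollary is just a convenient repackaging of Lemma \ref{L:opForm} in which the hard analytic content has already been absorbed, and the BEST Theorem supplies an explicit combinatorial evaluation of $|\mathfrak{E}(D)|$.
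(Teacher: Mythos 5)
Your proposal is correct and follows essentially the same route as the paper, which derives the corollary by combining Lemma \ref{L:opForm} with the BEST theorem and the observation that $|E(D)| = d(k-1)$. Your additional remarks (the vanishing of both sides in the non-Eulerian case and the explicit count of the order of $\hat f$) merely make explicit what the paper leaves implicit.
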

As a final note, recall that $\tau(G)$ can be computed using the Matrix Tree Theorem, which makes the computation of the right-hand side of the equality in Corollary \ref{C:opForm} efficient.
\begin{theorem}
\label{T:Kirchhoff}
(Matrix Tree Theorem/Kirchhoff's Theorem) For a given connected graph $G$ with $n$ labeled vertices, let $\lambda_1, \lambda_2, \dots \lambda_{n-1}$ be the non-zero eigenvalues of ${\cal L}(G) = D(G) - A(G)$.  Then 
\[
\tau(G) = \frac{\lambda_1\lambda_2 \dots \lambda_{n-1}}{n}.
\]
\end{theorem}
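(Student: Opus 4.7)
The plan is to establish Kirchhoff's theorem via the classical signed-incidence factorization of the Laplacian and then convert the cofactor form of the matrix-tree theorem into the eigenvalue form stated here.

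First, I would orient each edge of $G$ arbitrarily, forming the $n \times |E(G)|$ signed incidence matrix $B$ with $B_{v,e} = +1$ if $e$ leaves $v$, $-1$ if $e$ enters $v$, and $0$ otherwise. A routine calculation gives ${\cal L}(G) = BB^T$, independent of the choice of orientation. Deleting any row $i$ from $B$ yields $B_i$, and the principal $(n-1)\times(n-1)$ submatrix $L_i$ of ${\cal L}(G)$ obtained by removing row and column $i$ then satisfies $L_i = B_i B_i^T$. Applying the Cauchy--Binet formula gives
\[
\det(L_i) \;=\; \sum_{\substack{S \subseteq E(G)\\ |S| = n-1}} \det(B_i[S])^2,
\]
where $B_i[S]$ denotes the restriction of $B_i$ to the columns indexed by $S$.

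The heart of the argument is the combinatorial claim that $\det(B_i[S]) = \pm 1$ when the edge set $S$ forms a spanning tree of $G$, and $0$ otherwise. If $S$ contains a cycle, the corresponding columns of $B_i[S]$ admit a signed linear dependence (the alternating-sign sum around the cycle), forcing the determinant to vanish. If $S$ is a spanning tree, any leaf $v \neq i$ has exactly one incident edge in $S$, so the corresponding row of $B_i[S]$ contains a single $\pm 1$; expanding along this row reduces to a spanning tree on one fewer vertex, completing an induction. The conclusion is the cofactor form $\det(L_i) = \tau(G)$ for every $i$.

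Finally, I would translate into eigenvalues. Since ${\cal L}(G)$ is real symmetric with the all-ones vector in its kernel, its spectrum is $\{0, \lambda_1, \dots, \lambda_{n-1}\}$, and its characteristic polynomial factors as $\phi_{\cal L}(\lambda) = \lambda \prod_{j=1}^{n-1}(\lambda - \lambda_j)$. The coefficient of $\lambda^1$ on this side is $(-1)^{n-1}\prod_{j=1}^{n-1}\lambda_j$, while on the other side it equals $(-1)^{n-1}$ times the sum of all $n$ principal $(n-1)\times(n-1)$ minors of ${\cal L}(G)$, each of which is $\tau(G)$ by the previous step. Equating yields $\prod_{j=1}^{n-1}\lambda_j = n\,\tau(G)$, as desired. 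The main obstacle is the combinatorial identification of spanning-tree minors of the incidence matrix; the leaf-induction handles this cleanly, and everything else is formal linear algebra.
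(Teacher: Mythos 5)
Your proof is correct, but there is nothing in the paper to compare it against: the authors state the Matrix Tree Theorem purely as a citation of a classical result (used only to make the computation of $\tau$ in Corollary \ref{C:opForm} efficient) and give no proof of it. Your argument is the standard one --- the factorization ${\cal L}(G)=BB^T$ via the signed incidence matrix, Cauchy--Binet, the leaf-induction showing $\det(B_i[S])=\pm 1$ exactly when $S$ spans a tree (giving the cofactor form $\det(L_i)=\tau(G)$), and then reading off the product of the nonzero eigenvalues from the degree-one coefficient of the characteristic polynomial of ${\cal L}(G)$ --- and each step is sound. The only point worth flagging is contextual rather than logical: in this paper $\tau$ is actually applied to Eulerian multi-digraphs $D_{\hat f}$ (arborescence counts), so if you wanted your proof to cover every invocation in the paper you would need the directed/multigraph version (e.g.\ via the out-degree Laplacian with a deleted row, or by noting your incidence-matrix argument survives parallel edges since any two parallel columns are dependent); for the theorem exactly as stated, your proof is complete.
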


\section{Euler operators and Veblen hypergraphs}

In Lemma \ref{L:opForm} we showed that the only differential operators $\hat f$ for which $\hat f \tr(A^{d(k-1)}) \neq 0$ are the operators whose associated digraphs are Eulerian.  The question remains: which $\hat f$ have an Eulerian associated digraph?  We answer this question with the following graph decoration. 

\begin{definition}
We define the \emph{$u$-rooted directed star of a $k$-uniform edge $e$} to be 
\[
S_e(u) = (e,\{uv: v \in e, u \neq v\}).
\]
A \emph{rooting} of a $k$-graph ${\cal H}$ is an ordering $R = (S_{e_{1}}(v_{1})
, S_{e_{2}}(v_{2}), \dots, S_{e_{m}}(v_m))$ such that $E({\cal H}) = \{e_1, \dots, e_m\}$ and $v_i \leq v_{i+1}$.  Given a rooting of $\cal H$ we define the \emph{rooted multi-digraph} of $R$ to be
\[
D_R = \bigcup_{i=1}^m S_{e_i}(v_i)
\]
where the union sums edge multiplicities.  We say that a rooting $R$ is an \emph{Euler rooting} if $D_R$ is Eulerian.  We denote the mutli-set of rooted digraphs of ${\cal H}$ as $S({\cal H})$.
\end{definition}
Note that two distinct rootings can yield the same rooted digraph.  We suppress the subscript $D_R$ and write $D$ when the context is clear.  We further refer to $D$ as a rooted digraph of ${\cal H}$ for convenience. 
\begin{definition}
Given a rooted digraph $D \in S({\cal H})$, we define the \emph{rooted operator of $D$ } to be
\[
\hat f_D = \prod_{uv \in E(D)} \frac{\partial}{\partial A_{u,v}}.
\]
Moreover, we denote
\[
\hat S({\cal H}) = \{\hat f_D : D \in R({\cal H})\}.
\]
In the case when $D$ is Eulerian we refer to $\hat f_D$ as an Euler operator.  
\end{definition}

The notation of $\hat f_D$ is consistent with our usage of $\hat f$ whence 
\[
\hat f_D \mid \prod_{i=1}^n \hat f_i^{d_i}
\]
where $d_i $ is equal to the number of times vertex $i$ is appears as a root of $D$.  If $\hat f$ is a rooted operator then it is understood that there exists a (not necessarily unique) rooting $R$ such that, with a slight abuse of notation, $\hat f = \hat f_{D_R}$.  We call such a rooting an {\em underlying rooting} of a differential operator.

\begin{lemma}
\label{L:EulerianRootings}
The associated digraph of an operator $\hat f$ is Eulerian if and only if $\hat f$ is an Euler operator.  
\end{lemma}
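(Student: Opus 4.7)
The plan is to prove both directions by unpacking definitions, with the reverse direction carrying the substantive content. Concretely, I would show that every addend $\hat f$ of $\prod_{i=1}^n \hat f_i^{d_i}$ arises canonically as $\hat f_{D_R}$ for a rooting $R$ whose rooted digraph $D_R$ coincides with the associated digraph $D_{\hat f}$.

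For the forward direction, suppose $\hat f$ is an Euler operator, so $\hat f = \hat f_D$ for some Eulerian rooted digraph $D = D_R$.  Because $\hat f_D = \prod_{uv \in E(D)} \partial/\partial A_{u,v}$, the exponent of $\partial/\partial A_{u,v}$ appearing in $\hat f$ is exactly the multiplicity of the arc $u \to v$ in $D$.  By definition, $D_{\hat f}$ records precisely these exponents as arc multiplicities, so $D_{\hat f} = D$, which is Eulerian.

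For the reverse direction, I would unpack the operator $\hat f_i$ starting from $f_i = \sum_{e \ni i} \prod_{j \in e \setminus \{i\}} x_j$, obtaining $\hat f_i = \sum_{e \ni i}\prod_{j \in e \setminus \{i\}} \partial/\partial A_{i,j}$, a sum of star-derivative operators, one per edge of ${\cal H}$ containing $i$.  Expanding $\prod_i \hat f_i^{d_i}$, every addend $\hat f$ is specified by, for each $i$, a multiset of $d_i$ edges $e^{(i)}_1,\dots,e^{(i)}_{d_i}$ of ${\cal H}$ with $i \in e^{(i)}_t$.  Arranging the pointed edges $(e^{(i)}_t, i)$ in nondecreasing order of root yields a rooting $R$ of the multi-hypergraph ${\cal H}'$ whose edge multiset is $\{e^{(i)}_t\}$.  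Comparing arc multiplicities at each pair $(i,j)$: the multiplicity of $i \to j$ in $D_R$ equals the number of pointed edges $(e,i) \in R$ with $j \in e$, which is precisely the exponent of $\partial/\partial A_{i,j}$ in $\hat f$.  Hence $D_R = D_{\hat f}$ and $\hat f = \hat f_{D_R}$.  If $D_{\hat f}$ is Eulerian, then $D_R$ is Eulerian, making $R$ an Euler rooting and $\hat f = \hat f_{D_R}$ an Euler operator.

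The main obstacle is purely notational bookkeeping: one must carefully distinguish a rooting (an ordered list of pointed edges) from its rooted digraph (a multi-digraph) and verify that the identification $\hat f \leftrightarrow R$ is compatible with both the $D_{\hat f}$ and $D_R$ constructions.  The identification is not injective in general---distinct pointed-edge multisets can yield the same product of partials when parallel edges coincide---but because the lemma requires only existence of an Euler rooting witnessing that $\hat f$ is an Euler operator, this ambiguity is harmless.  Once these conventions are fixed, the equivalence follows essentially by inspection from the definitions.
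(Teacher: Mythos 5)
Your proof is correct. The paper actually states Lemma \ref{L:EulerianRootings} without any proof, treating it as immediate from the definitions; your argument---identifying each addend of $\prod_i \hat f_i^{d_i}$ with a rooting whose rooted digraph coincides with the associated digraph, and noting the harmless non-injectivity of this identification---is precisely the definitional verification the authors leave implicit, so there is nothing to contrast.
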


\begin{remark}
By Lemma \ref{L:EulerianRootings} the only operators which have non-zero contribution to $\Tr_d({\cal H})$ are Euler operators.  We denote $R({\cal H}) \subseteq S({\cal H})$ to be the multi-set of \emph{Euler rooted digraphs of ${\cal H}$}.  We further denote 
\[
\hat R({\cal H}) = \{\hat f_D : D \in R({\cal H})\}.
\]
\end{remark}

\begin{remark}
One can deduce Theorem 4.1 of \cite{Sha} from Lemma \ref{L:EulerianRootings} by a change of notation: our $\hat f_D$ is their $F$, our set of Eulerian associated digraphs arising from $\hat f_R$ is their ${\bf E}_{d,{m-1}}(n)$, and our $\mathfrak{E}(D)$ is their ${\bf W}(E)$.
\end{remark}

We now show that an Euler rooting is a rooting of a special type of hypergraph.
\begin{definition}
A \emph{Veblen hypergraph}\footnote{The nomenclature is a reference to Oswald Veblen (1880-1960) who proved an extension of Euler's theorem in 1912.  We present a brief note about Veblen's namesake theorem at the conclusion of this section.} is a $k$-uniform, $k$-valent multi-hypergraph. 
\end{definition}

\begin{lemma}
\label{L:RootingsVeblen}
An Euler rooting $R$ is a rooting of precisely one labeled Veblen hypergraph.
\end{lemma}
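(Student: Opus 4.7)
The plan is to split the claim into uniqueness and the verification that the unique candidate is in fact Veblen. For uniqueness, observe that the data of a rooting $R = (S_{e_1}(v_1), \dots, S_{e_m}(v_m))$ explicitly records the edge multiset $\{e_1, \dots, e_m\}$, so there is at most one labeled multi-hypergraph on $[n]$ for which $R$ is a rooting in the sense of the definition; denote this candidate by $\mathcal{H}_R$. Its $k$-uniformity is automatic from $|e_i| = k$, so all that remains is to show that the Euler hypothesis on $R$ forces $\mathcal{H}_R$ to be $k$-valent, i.e., that $k \mid \deg_{\mathcal{H}_R}(v)$ for every vertex $v$.

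To verify this, I would read off the in- and out-degrees of a vertex $v$ in $D_R$ directly from the definition of the rooted stars. Each star $S_{e_i}(v_i)$ with $v_i = v$ contributes $k-1$ arcs out of $v$ and none into $v$, while each star with $v \in e_i$ and $v_i \neq v$ contributes $0$ arcs out of $v$ and exactly one arc into $v$. Writing $r_v$ for the number of indices $i$ with $v_i = v$, and noting that the stars containing $v$ without rooting there number $\deg_{\mathcal{H}_R}(v) - r_v$, this yields
\[
\deg^+(v) = (k-1)\, r_v, \qquad \deg^-(v) = \deg_{\mathcal{H}_R}(v) - r_v.
\]
The Euler hypothesis $\deg^+(v) = \deg^-(v)$ (which is a necessary condition for $D_R$ to be Eulerian) then forces $\deg_{\mathcal{H}_R}(v) = k\, r_v$, which is a multiple of $k$, establishing $k$-valency.

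The argument is essentially a two-line bookkeeping exercise once the incidences of $v$ are separated by the role $v$ plays (root versus non-root), so I anticipate no serious obstacle. The only minor subtlety is being attentive to the asymmetry built into the rooted star $S_e(u)$: the root $u$ is the unique source of the $k-1$ arcs, so rooting contributes to out-degree while non-root incidences contribute to in-degree, and this is exactly what makes the Euler balance condition translate to a divisibility condition on vertex degree in the underlying hypergraph.
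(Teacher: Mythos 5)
Your proof is correct and follows essentially the same route as the paper: uniqueness is read off from the fact that the rooting determines the edge multiset, and $k$-valency comes from computing $\deg^+(v)=(k-1)r_v$ and $\deg^-(v)=\deg_{\mathcal{H}_R}(v)-r_v$ and invoking the Eulerian balance condition, which is exactly the paper's degree bookkeeping (merely rearranged). No gaps.
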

\begin{proof}
Suppose $S = (S_i)_{i=1}^m$ is a rooting of a connected $k$-graph ${\cal H}$.  Since $D_S$ is Eulerian we have for all $j \in V({\cal H})$  
\[
\deg^+(j) = (k-1)|\{i : v_i = j\}| = |\{i : v_i \neq j, j \in e_i\}| = \deg^-(j).
\]
  Fix a vertex $v \in V({\cal H})$.
We compute
\[
 \deg_{\cal H}(v) = \deg_D^+(v) + \deg_D^-(v) = k|\{i : v_i = v\}|.  
\]

Observe that $k \mid \deg_{\cal H}(v)$; it follows that ${\cal H}$ is Veblen by definition. Now suppose that ${\cal H}_0$ is a connected Veblen graph such that $S$ is an Euler rooting of ${\cal H}_0$.  As $S$ is a rooting of $H_0$, $E({\cal H}_0) = E({\cal H})$ and since both hypergraphs are connected $V({\cal H}_0) = V({\cal H})$. It follows that ${\cal H}$ is unique.
\end{proof}

We combine Lemmas \ref{L:EulerianRootings} and \ref{L:RootingsVeblen} into the following Lemma.

\begin{Lemma}
\label{P:Euler}
We have
\[
\hat f\tr(A^{d(k-1)}) \neq 0
\]
if and only if $\hat f = \hat f_D$ is a rooted operator.  Moreover, the underlying rooting of $\hat f$ is necessarily an Euler rooting of precisely one connected, labeled Veblen hypergraph.
\end{Lemma}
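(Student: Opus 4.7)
The plan is to deduce this combined statement directly from the three preceding results, without introducing any new combinatorial machinery. First I would invoke Lemma~\ref{L:opForm}, which converts the analytic condition $\hat f \tr(A^{d(k-1)}) \neq 0$ into the combinatorial condition that the associated digraph $D_{\hat f}$ be Eulerian. Next I would apply Lemma~\ref{L:EulerianRootings} to rewrite the latter as ``$\hat f$ is an Euler operator,'' i.e.\ $\hat f = \hat f_D$ for some $D = D_R$ arising from an Euler rooting $R$. Composing these two biconditionals gives the first assertion of the lemma.

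For the ``moreover'' clause, I would then appeal to Lemma~\ref{L:RootingsVeblen}, which guarantees that any Euler rooting is a rooting of exactly one connected, labeled Veblen hypergraph. Applying this to the underlying Euler rooting of $\hat f$ yields the required uniqueness of the associated Veblen hypergraph, finishing the argument.

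Because the statement is essentially a bundling of the two preceding lemmas (with Lemma~\ref{L:opForm} supplying the bridge back to $\tr(A^{d(k-1)})$), I do not expect any genuine obstacle here; all real technical work was already discharged. In particular, the in-degree/out-degree balance $\deg^+(v)=\deg^-(v)$ used in Lemma~\ref{L:RootingsVeblen} to derive $\deg_{\cal H}(v) = k\,|\{i : v_i = v\}|$ is the only nontrivial step, and it is already in hand; the only care needed in the combined lemma is to confirm that the ``underlying rooting'' is well-defined up to the equivalences built into the definition of a rooting so that uniqueness of the Veblen hypergraph can be cleanly invoked.
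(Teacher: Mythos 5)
Your proposal is correct and matches the paper exactly: the paper offers no separate proof for this lemma, stating only that it ``combines'' Lemmas~\ref{L:EulerianRootings} and~\ref{L:RootingsVeblen} (with Lemma~\ref{L:opForm} supplying the link to $\tr(A^{d(k-1)})$), which is precisely the chain of biconditionals you describe.
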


In the following section we use Lemma \ref{P:Euler} to express the codegree-$d$ coefficient of a $k$-graph as a function of Veblen hypergraphs.  Here we conclude with a note about Veblen's theorem. 

\begin{theorem}
\label{T:Veblen}
(Veblen's theorem \cite{Veb}) The set of edges of a finite graph can be written as a union of disjoint simple cycles if and only if every vertex has even degree.
\end{theorem}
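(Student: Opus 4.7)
The plan is to prove the two directions of Theorem \ref{T:Veblen} separately, with the nontrivial direction handled by induction on the number of edges. For the easy direction, suppose $E(G) = C_1 \sqcup C_2 \sqcup \cdots \sqcup C_r$ is a decomposition into the edge sets of pairwise edge-disjoint simple cycles. Each cycle $C_i$ containing a vertex $v$ contributes exactly $2$ to $\deg_G(v)$, and the edge-disjointness ensures these contributions add independently. Hence $\deg_G(v) = 2 \, |\{i : v \in V(C_i)\}|$, which is even.

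For the converse, I would induct on $|E(G)|$. The base case $|E(G)| = 0$ is trivial via the empty decomposition. For the inductive step, assume every vertex has even degree and $|E(G)| > 0$. Pick a vertex $v_0$ of positive degree and greedily build a trail starting at $v_0$, adding one unused edge at a time. Whenever the trail arrives at a vertex $w \neq v_0$, it has used an odd number of the edges incident to $w$; since $\deg(w)$ is even, at least one edge at $w$ remains available, so the trail extends. Finiteness of $G$ forces the trail to eventually revisit some vertex, and the portion of the trail between the two visits to that vertex is a simple cycle $C$.

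Removing $E(C)$ from $G$ reduces the degree of each vertex on $C$ by exactly $2$ and leaves every other degree unchanged, so $G - E(C)$ still has all even degrees and strictly fewer edges. By the induction hypothesis, $E(G - E(C))$ decomposes into the edge sets of pairwise disjoint simple cycles, and adjoining $C$ produces the desired decomposition of $E(G)$. The main subtlety is verifying that the greedy trail can always be extended and must eventually close into a simple cycle; both claims depend essentially on the parity hypothesis together with finiteness of $G$, and this is the only nonroutine step of the argument.
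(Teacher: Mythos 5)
Your proof is correct and is the standard argument for Veblen's theorem; note that the paper itself states this result only as a cited classical fact (Veblen, 1912) and gives no proof of its own, so there is nothing internal to compare against. Both directions are handled properly: the counting argument for necessity and the induction with a greedily extended trail for sufficiency. The one point worth tightening is the extraction of a \emph{simple} cycle: an arbitrary revisited vertex only yields a closed trail, so you should take the \emph{first} repetition along the trail, which guarantees that the vertices strictly between the two occurrences are pairwise distinct and hence that the extracted closed trail is a simple cycle. With that small refinement the argument is complete, and it also extends verbatim to the multigraph setting (allowing $2$-cycles formed by parallel edges) in which the paper actually applies the theorem.
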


Unfortunately, Veblen's theorem does not extend to higher uniformity: the set of edges of a finite $k$-graph ${\cal H}$ can not always be written as a union of disjoint simple $k$-regular $k$-graphs if and only if ${\cal H}$ is $k$-valent.  Consider the Veblen 3-graph ${\cal T}$ which consists of three bottomless tetrahedrons each sharing a common base.  To be precise, 
\[
{\cal T} = \left(\{a,b,c,1,2,3\}, \bigcup_{i=1}^3 \left( \binom{\{a,b,c,i\}}{3} \setminus \{a,b,c\} \right)\right).
\]
A drawing of ${\cal T}$ is given in Figure \ref{F:1}.
\begin{figure}[ht]
\begin{center}
\includegraphics[scale = .3]{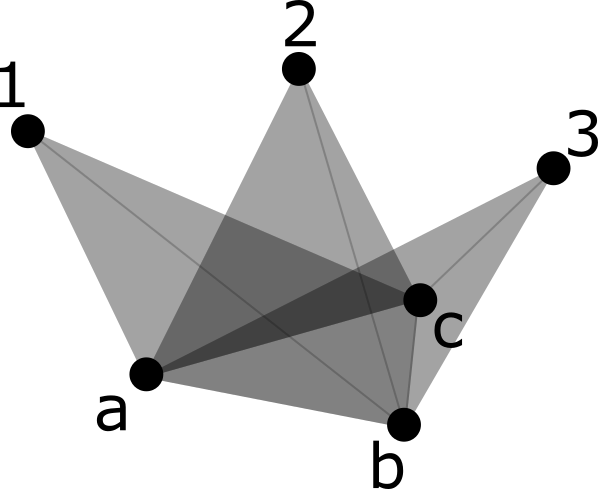} 
\end{center}
\caption{The Veblen 3-graph ${\cal T}$ where edges are drawn as triangular faces.}
\label{F:1}
\end{figure}
Since there are only three edges containing $i \in [3]$ it must be the case that any partition into Veblen graphs places each edge containing $i$ into the same class.  Observe that for each $i$ the vertices $a$, $b$, and $c$ each have degree $2$.  Therefore, the only $3$-valent edge partition is the trivial one.

\section{The associated coefficient of a Veblen hypergraph}

We now turn our attention to computing the codegree-$d$ coefficient of a $k$-graph ${\cal H}$ via Equation \ref{E:Morozov}.  From Lemma \ref{P:Euler} we know that the only operators which satisfy $\hat f \tr(A^{d(k-1)}) \neq 0$ are rooted operators.  Furthermore, as a differential operator of $\Tr_d({\cal A}_{\cal H})$ is of degree $d$, the underlying Euler rooting of $\hat f$ is a rooting of precisely one connected, labeled Veblen hypergraph with $d$ edges.  We equate $\Tr_d({\cal H})$ to a weighted sum over Euler rootings of connected Veblen graphs with $d$ edges which ``appear" in $\cal H$.  Consider the following generalization of the notion of subgraph.
  
\begin{definition}
For a labeled multi-hypergraph $H$, we call  the simple $k$-graph formed by removing duplicate edges of $H$ the \emph{flattening} of $H$ and denote it $\underline{H}$. We say that $H$ is an \emph{infragraph} of ${\cal H}$ if $\underline{H} \subseteq {\cal H}$.  Let ${\cal V}_d({\cal H})$ denote the set of isomorphism classes of connected, labeled Veblen infragraphs with $d$ edges of ${\cal H}$.
\end{definition}

\begin{definition}
The \emph{associated coefficient of a connected Veblen hypergraph} $H$ is 
\[
C_H = \sum_{D \in R(H)} \left(\frac{\tau_{D}}{\prod_{v \in V(D)} \deg^-(v)}\right).
\]
The \emph{associated coefficient} of a (possibly disconnected) Veblen hypergraph $H = \bigcup_{i=1}^m G_i$ is 
\[
C_H = \prod_{i=1}^m C_{G_i}.
\]
\end{definition}

  \begin{definition}
  For a $k$-graph ${\cal H}$ and a Veblen $k$-graph $H = \bigcup_{i=1}^m G_i$ we define 
\[
(\#H \subseteq {\cal H}) =\frac{1}{|\Aut(H)|}\prod_{i=1}^m|\Aut(\underline{G_i})||\{S \subseteq {\cal H} : S \cong \underline{G_i}\}|.
\]
\end{definition}
In the case when $H$ is connected this simplifies to
 \[
 (\#H \subseteq {\cal H}) = \frac{|\Aut(\underline{H})|}{|\Aut(H)|}|\{S \subseteq {\cal H} : S \cong \underline{H}\}| = |\Aut(\underline{H})/\Aut(H)|\cdot |\{S \subseteq {\cal H} : S \cong \underline{H}\}|.
 \]
 
Note that for $H = \bigcup_{i=1}^m G_i$, $(H \subseteq {\cal H})$ is not multiplicative over the components of $H$ as
\[
\prod_{i=1}^m(\#G_i \subseteq {\cal H}) = \frac{(\#H \subseteq {\cal H})|\Aut(H)|}{\prod_{i=1}^m|\Aut(G_i)|}.
\]
However, we have the following identity.
\begin{lemma}
\label{L:mu}
Let $H = \bigcup_{i=1}^m G_i$ be a Veblen $k$-graph.  If $\mu_H$ denotes the number of linear orderings of the components of $H$ (where two components are indistinguishable if they are isomorphic) then  
\[
 (\#H \subseteq {\cal H}) = \frac{\mu_H}{m!}\prod_{i=1}^m(\#G_i \subseteq {\cal H}).
\]
\end{lemma}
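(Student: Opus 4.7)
The plan is to compare the two sides by unwinding the definitions and reducing the identity to a standard fact about automorphism groups of disjoint unions.

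First I would compute the ratio $(\#H \subseteq {\cal H}) / \prod_{i=1}^m (\#G_i \subseteq {\cal H})$ directly from the definitions. Using the connected form $(\#G_i \subseteq {\cal H}) = (|\Aut(\underline{G_i})|/|\Aut(G_i)|)|\{S \subseteq {\cal H}: S \cong \underline{G_i}\}|$ and the disconnected formula for $(\#H \subseteq {\cal H})$, the factors $|\Aut(\underline{G_i})|$ and the subgraph counts cancel on both sides, leaving
\[
\frac{(\#H \subseteq {\cal H})}{\prod_{i=1}^m(\#G_i \subseteq {\cal H})} = \frac{\prod_{i=1}^m |\Aut(G_i)|}{|\Aut(H)|}.
\]
Thus the lemma reduces to the purely group-theoretic claim $|\Aut(H)| = (m!/\mu_H)\prod_{i=1}^m |\Aut(G_i)|$.

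Next I would establish this claim by grouping the components into isomorphism classes. Write the components $G_1,\dots,G_m$ as a disjoint union of $t$ isomorphism classes, with class $j$ containing $m_j$ copies, so that $\sum_{j=1}^t m_j = m$. The automorphism group of a disjoint union of hypergraphs is the corresponding wreath product: any automorphism of $H$ freely permutes the components within each isomorphism class and then applies an automorphism to each component. Hence
\[
|\Aut(H)| = \prod_{j=1}^t m_j! \cdot \prod_{i=1}^m |\Aut(G_i)|.
\]
On the other hand, the number of distinguishable linear orderings of the components of $H$ is precisely the multinomial coefficient
\[
\mu_H = \binom{m}{m_1,m_2,\dots,m_t} = \frac{m!}{\prod_{j=1}^t m_j!},
\]
since orderings that differ only by swapping isomorphic components are identified. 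Combining these two formulas gives $\prod_{j=1}^t m_j! = m!/\mu_H$, which is exactly the identity needed above.

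I do not anticipate a serious obstacle: the only mild subtlety is justifying the wreath-product description of $\Aut(H)$ for multi-hypergraphs (as opposed to simple graphs), but this follows from the standard observation that an automorphism of a disjoint union sends components to isomorphic components, and multi-hypergraph structure does not change this. Putting the two calculations together yields the claimed equality, completing the proof.
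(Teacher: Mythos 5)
Your proof is correct and takes essentially the same route as the paper's: the reduction to the group-theoretic identity $|\Aut(H)| = (m!/\mu_H)\prod_{i=1}^m|\Aut(G_i)|$ is exactly the ratio computation the paper records just before the lemma, and the paper establishes that identity by applying the First Isomorphism Theorem to the homomorphism $\Aut(H) \to \mathfrak{S}_{\mu_1}\times\cdots\times\mathfrak{S}_{\mu_t}$ with kernel $\prod_i \Aut(G_i)$, which is precisely your wreath-product description of $\Aut(H)$.
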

\begin{proof}
Suppose there are $t$ isomorphism classes of components of $H$ with representatives $H_1, H_2, \dots H_t$.  Denote the number of components of $H$ which are isomorphic to $H_i$ as $\mu_i$. Fix an ordering of the components which are isomorphic to $H_i$, $\{G_{1,i},G_{2,i}, \dots, G_{\mu_i,i}\}$.  The number of distinct linear orderings of the components of $H$ where $G_i$ and $G_j$ are indistinguishable when $G_{r,i} \cong G_{s,i}$ is
\[
\mu _{H} = \binom{m}{\mu_1, \mu_2, \dots,  \mu_t}
\]
so that 
\[
\frac{m!}{\mu_H} = \prod_{i=1}^t \mu_i!.
\]
Note that for $a \in \Aut(H)$, there exists $\sigma \in \mathfrak{S}_{\mu_i}$ such that $a(G_{j,i}) = G_{\sigma(j),i}$.  In this way, $\Aut(H)$ induces a permutation on the isomorphism classes of the components of $H$ (note that this map is well-defined since the components are labeled).  Let $\psi: \Aut(H) \to \mathfrak{S}_{\mu_1} \times \mathfrak{S}_{\mu_2}\times \dots \times \mathfrak{S}_{\mu_t}$ be such a map.  Notice $\ker(\psi)$ is the group of automorphisms of $H$ which maps each component to itself.  Appealing to the First Isomorphism Theorem we have
\[
\frac{|\Aut(H)|}{\prod_{i=1}^m|\Aut(G_i)|} = \prod_{i=1}^t \mu_i!.
\]
The desired equality follows by substitution. 
\end{proof}

\begin{remark}
The equation in Lemma \ref{L:mu} implies that $(\#H \subseteq {\cal H})$ is multiplicative over its components if and only if the components of $H$ are pairwise non-isomorphic.
\end{remark}

Let 
\[
A(d,n) = \left\{(d_1, \dots, d_n) : \sum d_i = d, d_i \geq 0 \right\}
\]
be the set of \emph{arrangements of $d$ into $n$ non-negative parts} and further let $A^+(d,n) \subset A(d,n)$ be the set of \emph{arrangements of $d$ into $n$ positive parts}.  For a $k$-graph ${\cal H}$ and $a \in A^+(d,|E|)$ let $R^a({\cal H})$ be the set of Euler rootings of all labeled, connected Veblen infragraphs of ${\cal H}$ which have the property that vertex $v_i$ is the root of exactly $d_i$ edges.  (N.B. We take $a \in A^+(d,|E|)$ as it is necessary that $d_i > 0$ for $D \in R^a({\cal H})$ to be Eulerian.)

\begin{remark}
Let ${\cal V}^*_d({\cal H})$ denote the set of (possibly disconnected) Veblen infragraphs of ${\cal H}$ with $d$ edges up to isomorphism.  Further let ${\cal V}_d({\cal H})\subseteq {\cal V}^*_d({\cal H})$ denote the set of connected Veblen infragraphs of ${\cal H}$ with $d$ edges up to isomorphism. 
\end{remark}

We now present a formula for $\Tr_d({\cal H})$ as a weighted sum over its Veblen infragraphs.  
\begin{lemma}
\label{L:Trace}
  For a $k$-graph ${\cal H}$
\[
\Tr_d({\cal H}) = d(k-1)^n \sum_{H \in {\cal V}_d({\cal H})} C_H(\#H \subseteq {\cal H}).
\]
\end{lemma}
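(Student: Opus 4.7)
The plan is to begin from the Morozov trace formula (Equation~\ref{E:Morozov}) and reorganize the sum using Lemma~\ref{P:Euler} and Corollary~\ref{C:opForm}.  Expanding $\hat f_i=\sum_{e\ni i}\prod_{v\in e\setminus\{i\}}\partial/\partial A_{i,v}$ via the multinomial theorem gives
\[
\prod_{i=1}^n \hat f_i^{d_i} \;=\; \sum_{(r_i(e))}\prod_{i=1}^n \binom{d_i}{(r_i(e))_e}\,\hat f_{D((r))},
\]
where $(r_i(e))$ ranges over nonnegative integer tuples with $\sum_e r_i(e)=d_i$ and $r_i(e)=0$ when $i\notin e$, and $D((r))$ is the associated rooted multi-digraph.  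By Lemma~\ref{P:Euler}, only the $(r_i(e))$ whose associated digraph is Eulerian contribute nonzero terms to $\prod_i \hat f_i^{d_i}\tr(A^{d(k-1)})$, and each such configuration encodes an Euler rooting of a unique connected labeled Veblen infragraph of ${\cal H}$.

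Next, Corollary~\ref{C:opForm} evaluates each surviving term as $\hat f_D\tr(A^{d(k-1)})=d(k-1)\tau_D\prod_{v\in V(D)}(\deg^-(v)-1)!$ with $\deg^-(v)=d_v(k-1)$ by the Euler condition.  Substituting into Equation~\ref{E:Morozov}, cancelling factorials via $(d_v(k-1)-1)!/(d_v(k-1))!=1/\deg^-(v)$, and consolidating the prefactors $(k-1)^{n-1}\cdot d(k-1)$ into $d(k-1)^n$ yields
\[
\Tr_d({\cal H}) \;=\; d(k-1)^n \sum_{(r_i(e))}^{\mathrm{Euler}} \frac{\tau_{D((r))}\prod_v \binom{d_v}{(r_v(e))_e}}{\prod_{v\in V(D)}\deg^-(v)}.
\]

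The decisive step is the combinatorial identification of $\prod_v \binom{d_v}{(r_v(e))_e}$ with the number of rootings of the labeled Veblen infragraph $H_0$ encoded by $(r_i(e))$: a rooting in the paper's sense is an ordered list of rooted stars subject to the canonical constraint $v_i\le v_{i+1}$, and at each root $v$ there are exactly $\binom{d_v}{(r_v(e))_e}$ distinct orderings of the $d_v$ rooted stars (identical rooted stars $S_e(v)$ being indistinguishable).  Consequently the sum rewrites as $\sum_{H_0\text{ labeled}} C_{H_0}$, and grouping labeled infragraphs by isomorphism class $H\in{\cal V}_d({\cal H})$ yields the factor $(\#H\subseteq{\cal H})=|\Aut(\underline H)|/|\Aut(H)|\cdot|\{S\cong\underline H\}|$ for connected $H$ by an orbit-stabilizer argument on multiplicity assignments of $\underline H$ into ${\cal H}$, completing the proof.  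The principal obstacle is precisely this combinatorial identification of Morozov's multinomial factor with the paper's ordered-rooting convention; once this is in place, the factorial cancellations and the labeled-to-isomorphism passage are routine bookkeeping.
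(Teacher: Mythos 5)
Your proposal is correct and follows essentially the same route as the paper: Equation~\ref{E:Morozov} plus Lemma~\ref{P:Euler} to restrict to Euler rootings, Corollary~\ref{C:opForm} to evaluate each surviving operator, the cancellation $(d_i(k-1)-1)!/(d_i(k-1))! = 1/\deg^-(v_i)$, and the automorphism-group bookkeeping to pass from labeled infragraphs to $(\#H\subseteq{\cal H})$. The only difference is one of presentation: you make explicit the identification of the multinomial coefficients in the expansion of $\prod_i\hat f_i^{d_i}$ with the number of rootings yielding a given rooted digraph, a step the paper absorbs silently into its definition of $R^a({\cal H})$ as a \emph{multiset} of Euler rooted digraphs.
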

\begin{proof}
For convenience let $|E| = |E({\cal H})|$.  We equate
\begin{align*}
    \sum_{H \in {\cal V}_d({\cal H})} C_H(\#H \subseteq {\cal H})&=\sum_{H \in {\cal V}_d({\cal H})}\left(\left(\sum_{D \in R(H)}\frac{\tau_D}{\prod_{v \in V(D))} \deg^-(v) }\right) (\# H \subseteq {\cal H})\right) \\
    &=\sum_{a \in A^+(d,|E|)} \left(\sum_{ D \in R^a({\cal H})}\frac{\tau_{D}}{\prod_{v \in V(D)} \deg^-(v) }\right).
\end{align*}
Recall 
\[
\Tr_d({\cal H}) = (k-1)^{n-1} \sum_{d_1 + \dots + d_n = d} \left(\prod_{i=1}^n \frac{\hat f_i^{d_i}}{(d_i(k-1))!}\tr(A^{d(k-1)})\right).
\]
  Applying Lemma \ref{P:Euler} we have
\[
\Tr_d({\cal H}) = (k-1)^{n-1} \sum_{a \in A^+(d,|E|)} \left(\sum_{D \in R^a({\cal H})}\frac{\hat f_{D}\tr(A^{d(k-1)})}{\prod_{i=1}^n(d_i(k-1))!}\right).
\]
  By Corollary \ref{C:opForm} ,
\[
\hat f_{D} \tr(A^{d(k-1)}) = d(k-1)\tau_{D} \prod_{v \in V(D_{R})} (\deg^-(v) -1)!
\]
When $D \in R^a({\cal H})$ with $a = (d_1,\ldots,d_n)$, we have $\deg^-(v_i) = d_i(k-1)$.  By substitution we have 
\begin{align*}
\Tr_d({\cal H}) &= d(k-1)^{n} \sum_{a \in A^+(d,|E|)} \left(\sum_{ D \in R^a({\cal H})}\frac{\tau_{D}}{\prod_{v \in V(D)} \deg^-(v) }\right) \\
 &= d(k-1)^n \sum_{H \in {\cal V}_d({\cal H})} C_H(\#H \subseteq {\cal H}).
 \end{align*}
\end{proof}

We are now prove a generalization of the Harary-Sachs formula for $k$-graphs. 

\begin{theorem}
\label{T:codegreeFormula}
For a simple $k$-graph $\cal H$, 
\[
\phi_d({\cal H}) = \sum_{H \in {{\cal V}^*_d}({\cal H})}(-(k-1)^n)^{c(H)}C_H(\# H \subseteq {\cal H}).
\]
\end{theorem}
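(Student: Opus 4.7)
The plan is to derive Theorem \ref{T:codegreeFormula} from the Morozov--Shakirov identity by expanding the Schur polynomial and then matching each summand against Lemma \ref{L:Trace}. Concretely, the Morozov--Shakirov formula gives $c_d = P_d(-\Tr_1({\cal A}_{\cal H})/1, \dots, -\Tr_d({\cal A}_{\cal H})/d)$, and the explicit form of the Schur polynomial yields
\[
c_d = \sum_{m=1}^d \frac{(-1)^m}{m!} \sum_{d_1+\cdots+d_m = d} \prod_{i=1}^m \frac{\Tr_{d_i}({\cal A}_{\cal H})}{d_i}.
\]
Into this expression I substitute Lemma \ref{L:Trace}, which rewrites $\Tr_{d_i}({\cal A}_{\cal H}) = d_i(k-1)^n \sum_{G_i \in {\cal V}_{d_i}({\cal H})} C_{G_i}(\#G_i \subseteq {\cal H})$. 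The factor of $d_i$ cancels the $1/d_i$ coming from the Schur polynomial, and the factors of $(k-1)^n$ combine to $(-(k-1)^n)^m$, giving
\[
c_d = \sum_{m=1}^d \frac{(-(k-1)^n)^m}{m!} \sum_{(G_1,\dots,G_m)} \prod_{i=1}^m C_{G_i}(\#G_i \subseteq {\cal H}),
\]
where the inner sum runs over ordered $m$-tuples of connected Veblen infragraphs with total edge-count $d$.

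Next I would repackage the ordered tuples into (possibly disconnected) Veblen infragraphs. For a fixed $H \in {\cal V}^*_d({\cal H})$ with $c(H) = m$ components, the ordered tuples $(G_1,\dots,G_m)$ whose disjoint union is (isomorphic to) $H$ are counted by $\mu_H$, the number of linear orderings of the components of $H$ where isomorphic components are identified. The multiplicativity $C_H = \prod_{i} C_{G_i}$ (by definition of the associated coefficient of a disconnected Veblen hypergraph), together with Lemma \ref{L:mu}, gives
\[
\sum_{(G_1,\dots,G_m)} \prod_{i=1}^m C_{G_i}(\#G_i \subseteq {\cal H}) = \sum_{\substack{H \in {\cal V}^*_d({\cal H})\\ c(H) = m}} \mu_H \cdot C_H \cdot \frac{m!}{\mu_H}(\#H \subseteq {\cal H}) = m! \sum_{\substack{H \in {\cal V}^*_d({\cal H})\\ c(H) = m}} C_H(\#H \subseteq {\cal H}).
\]
The $m!$ here cancels the $1/m!$ in the Schur expansion, and summing over all $m$ from $1$ to $d$ collapses to a single sum over ${\cal V}^*_d({\cal H})$, yielding the advertised formula.

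The routine but delicate step is the bookkeeping in the second paragraph: verifying that the combinatorial double-counting in Lemma \ref{L:mu} exactly accounts for $\mu_H$ ordered refinements of a disconnected $H$, and that the automorphism factors cancel as expected. The only true obstacle to watch for is the boundary term $m = 0$ (corresponding to $d = 0$), which contributes only to the leading coefficient $c_0$ and is therefore harmless for the codegree-$d$ statement with $d \geq 1$; the case $d=0$ is trivial since $\phi_0({\cal H}) = 1$ and the empty multi-hypergraph is the unique element of ${\cal V}^*_0({\cal H})$ with $c(H) = 0$ and $C_H = 1$, so the convention $(-(k-1)^n)^0 = 1$ makes the identity hold at $d=0$ as well.
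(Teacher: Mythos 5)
Your proposal is correct and follows essentially the same route as the paper's proof: expand the Schur polynomial in the Morozov--Shakirov identity, substitute Lemma \ref{L:Trace} so the $d_i$ factors cancel and the signs collect into $(-(k-1)^n)^m$, and then use Lemma \ref{L:mu} to convert the $\mu_H$ ordered tuples of connected components into a single sum over ${\cal V}^*_d({\cal H})$ with the $m!$ cancelling. The only difference is your explicit handling of the $d=0$ boundary case, which the paper omits by fixing $d \geq 1$.
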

\begin{proof}
Fix a $k$-graph ${\cal H}$ and $d \geq 1$. From \cite{Coo} we have by Equation \ref{E:Morozov}
\[
\phi_d({\cal H}) = P_d\left(\frac{-\Tr_1({\cal H})}{1}, \frac{-\Tr_2({\cal H})}{2},\dots, \frac{-\Tr_d({\cal H})}{d}\right)
\]
 where 
\[
P_d(t_1, t_2, \dots, t_d) = \sum_{m=1}^d \sum_{d_1 + \cdots+d_m = d} \frac{t_{d_1}t_{d_2}\cdots t_{d_m}}{m!}.
\]
 
Fix an $1 \leq m \leq d$ and an arrangement $a = (d_1, d_2, \dots, d_m) \in A^+(d,m)$.  By Lemma \ref{L:Trace}, 
\[
\frac{-\Tr_{d_i}({\cal H})}{d_i} = -(k-1)^n \sum_{G \in {\cal V}_{d_i}({\cal H})} C_{G} (\#G \subseteq {\cal H}).
\]
 Let ${\cal V}^a({\cal H})$ denote the set of $m$-tuples of connected unlabeled Veblen infragraphs of ${\cal H}$ whose $i$-th coordinate has $d_i$ edges for $i \in [m]$, such that $\sum_i d_i = d$.
 We have
\begin{align*}
    \prod_{i=1}^m \frac{-\Tr_{d_i}({\cal H})}{d_i} &= (-1)^m(k-1)^{mn}\prod_{i=1}^m \sum_{G \in {\cal V}_{d_i}({\cal H})}C_{G}(\#G \subseteq {\cal H}) \\
    & = (-(k-1)^n)^{m}\sum_{\substack{H = G_1 \cup \cdots \cup G_m \\(G_1, G_2, \dots, G_m) \in {\cal V}^a({\cal H})}}C_{H} \prod_{i=1}^m (\#G_i \subseteq {\cal H})
\end{align*}

For $m \in \mathbb{N}$, let ${\cal V}_d^m({\cal H})$ be the set of unlabeled Veblen infragraphs of ${\cal H}$ with $d$ edges and $m$ components.  Appealing to Lemma \ref{L:mu} we may write
\begin{align*}
\phi_d&=P_d\left(-\frac{\Tr_1({\cal H})}{1},-\frac{\Tr_2({\cal H})}{2}, \dots, -\frac{\Tr_d({\cal H})}{d}\right) \\
&= \sum_{m=1}^d \sum_{d_1+\dots + d_m = d} \frac{1}{m!} \prod_{i=1}^m \frac{-\Tr_{d_i}({\cal H})}{d_i} \\
&= \sum_{m=1}^d \left(\sum_{a \in A^+(m,d)} (-(k-1)^n)^{m}\sum_{H \in {\cal V}^a({\cal H}) }C_H\frac{\left(\prod_{i=1}^m (\#G_i \subseteq {\cal H})\right)}{m!} \right) \\
&= \sum_{m=1}^d (-(k-1)^n)^{m}\sum_{H \in {\cal V}^m_d({\cal H})}C_H\left(\frac{\mu_H}{m!}\prod_{i=1}^m(\# G_i \subseteq {\cal H})\right) \\
&=  \sum_{H \in {\cal V}^*_d({\cal H})}(-(k-1)^n)^{c(H)}C_H(\# H \subseteq {\cal H}).
\end{align*}
\end{proof}

\bibliography{main.bib}{}

\begin{thebibliography}{10}

\bibitem{Big}
Norman Biggs.
\newblock {\em Algebraic graph theory}.
\newblock Cambridge Mathematical Library. Cambridge University Press,
  Cambridge, second edition, 1993.

\bibitem{Chu}
F.~R.~K. Chung, R.~L. Graham, and R.~M. Wilson.
\newblock Quasi-random graphs.
\newblock {\em Combinatorica}, 9(4):345--362, Dec 1989.

\bibitem{Cla}
Gregory~J. Clark and Joshua Cooper.
\newblock {Leading Coefficients and the Multiplicity of Known Roots}.
\newblock {\em ArXiv e-prints}, June 2018.
\newblock http://adsabs.harvard.edu/abs/2018arXiv180605222C.

\bibitem{Coo}
Joshua Cooper and Aaron Dutle.
\newblock Spectra of uniform hypergraphs.
\newblock {\em Linear Algebra Appl.}, 436(9):3268--3292, 2012.

\bibitem{Die}
Reinhard Diestel.
\newblock {\em Graph theory}, volume 173 of {\em Graduate Texts in
  Mathematics}.
\newblock Springer-Verlag, Berlin, third edition, 2005.

\bibitem{Gel}
I.~M. Gelfand, M.~M. Kapranov, and A.~V. Zelevinsky.
\newblock {\em Discriminants, resultants and multidimensional determinants}.
\newblock Modern Birkh\"{a}user Classics. Birkh\"{a}user Boston, Inc., Boston,
  MA, 2008.
\newblock Reprint of the 1994 edition.

\bibitem{Har}
Frank Harary.
\newblock The determinant of the adjacency matrix of a graph.
\newblock {\em SIAM Rev.}, 4:202--210, 1962.

\bibitem{Hu2}
Shenglong Hu, Zheng-Hai Huang, Chen Ling, and Liqun Qi.
\newblock On determinants and eigenvalue theory of tensors.
\newblock {\em J. Symbolic Comput.}, 50:508--531, 2013.

\bibitem{Mor}
A.~Morozov and Sh. Shakirov.
\newblock Analogue of the identity {L}og {D}et = {T}race {L}og for resultants.
\newblock {\em J. Geom. Phys.}, 61(3):708--726, 2011.

\bibitem{Qi}
Liqun Qi.
\newblock Eigenvalues of a real supersymmetric tensor.
\newblock {\em J. Symbolic Comput.}, 40(6):1302--1324, 2005.

\bibitem{Sac}
Horst Sachs.
\newblock \"{U}ber {T}eiler, {F}aktoren und charakteristiche {P}olynome von
  {G}raphen.
\newblock {\em {T}eil I}, 12:7--12, 1966.

\bibitem{Sha}
Jia-Yu Shao, Liqun Qi, and Shenglong Hu.
\newblock Some new trace formulas of tensors with applications in spectral
  hypergraph theory.
\newblock {\em Linear Multilinear Algebra}, 63(5):971--992, 2015.

\bibitem{OEIS}
N.~J.~A. Sloane.
\newblock The on-line encyclopedia of integer sequences, 2018.
\newblock published electronically at https://oeis.org.

\bibitem{Tut}
W.~T. Tutte and C.~A.~B. Smith.
\newblock On {U}nicursal {P}aths in a {N}etwork of {D}egree 4.
\newblock {\em Amer. Math. Monthly}, 48(4):233--237, 1941.

\bibitem{Aar}
T.~van Aardenne-Ehrenfest and N.G. de~Bruijn.
\newblock {\em Circuits and Trees in Oriented Linear Graphs}, pages 149--163.
\newblock Birkh{\"a}user Boston, Boston, MA, 1987.

\bibitem{Veb}
Oswald Veblen.
\newblock An application of modular equations in analysis situs.
\newblock {\em Ann. of Math. (2)}, 14(1-4):86--94, 1912/13.

\end{thebibliography}
\bibliographystyle{plain}

\end{document}